\title{Local obstructions in sequences revisited}
\author{
  \ifthenelse{\boolean{anon}}{
    Authors
  }{
    Matthieu Rosenfeld, Alexander Shen\\ LIRMM, Univ Montpellier, CNRS, Montpellier, France
  }}
\date{}
\newtheorem{theorem}{Theorem}
\newtheorem{lemma}{Lemma}
\theoremstyle{remark}
\newtheorem{remark}{Remark}
\newcommand{\A}{\mathcal{A}}
\newcommand{\F}{\mathcal{F}}
\let\ge=\geqslant
\let\le=\leqslant
\begin{document}

\maketitle
\begin{abstract}
In this article we consider some simple combinatorial game and a winning strategy in this game. This game is then used to prove several known results about non-repetitive sequences and approximations with denominators from a lacunary sequence. In this way we simplify the proofs, improve the bounds and get for free the computable versions that required a separate treatment.
\end{abstract}

\section{Introduction}

Fifty years ago Erdős~\cite[p.~96]{erdosDiophantine} asked the following question. Let $n_k$ be a lacunary sequence of positive integers, i.e., $n_{k+1}>(1+\varepsilon)n_k$ for some $\varepsilon>0$. Can we find a real $\theta$ such that all distances from $n_k\theta$ to the nearest integer are separated from $0$ (exceed some positive $\delta$ for all $k$)? Several authors then showed that this is indeed the case, and provide some lower bounds for $\delta$ as a function of $\epsilon$. The bounds improved over time, and the best known asymptotic bound $\delta=\Omega\left(\varepsilon/|\log\varepsilon|\right)$ was obtained by Peres and Schlag~\cite{Peres2010Apr}; the computable version of this result was recently proved by Mourad~\cite{Mourad2024Jun}. Peres ans Schlag proofs relies on (some variant of) the Lovász Local Lemma, while Mourad's proof relies on his computable version of the Lefthanded Local Lemma.

We provide a simple proof of these results that uses some combinatorial game and a winning strategy in this game (based on Miller's argument in~\cite{Miller2012}). It gives slightly better constants in the bounds and does not require separate argument for the computable version.

The same game can be applied to sequences without repetitions. A classical (1906) result of Thue~\cite{thue06} says that there exists an infinite sequence in a three-letter alphabet that is square-free (does not contain substrings $uu$ for non-empty strings $u$). The same question may be asked in a more general situation where the alphabet is large (even infinite) but for each position only $3$ letters are allowed (the list of the allowed letters may depend on the position). The existence of such a sequence remains a conjecture~\cite[p.~215, Conjecture 1]{Grytczuk2013Mar}, but for lists of size $4$ the corresponding result is proven~\cite{Grytczuk2011Jan,Grytczuk2013Mar}, and the computable version of this result is proven~\cite{Mourad2024Jun} for lists of size~$6$. We note that all these results are easy corollaries of the game analysis, and the computable version does not require special treatment and is valid already for lists of size $4$.

The paper is organized as follows. In Section~\ref{sec:thegame} we explain the rules of the game and a sufficient condition for the existence of a winning strategy (Theorem~\ref{thm:thegame}). We note that this condition can be easily adapted to prove the existence of a \emph{computable} winning strategy (Theorem~\ref{thm:computablegame}). Then in Section~\ref{sec:miller} we apply this game to the question about forbidden substrings considered by Miller in~\cite{Miller2012} (our strategy is a straightforward generalisation of Miller's argument used there). In Section~\ref{sec:nonrepetitive} we observe that the game can be used to prove the existence of square-free sequences for $4$-list assignments. Then in Section~\ref{sec:denominators} we show how the game can be applied to Erdős's question. Finally, in Section~\ref{sec:history} we provide some historical context and discuss the game approach.

\section{The game}\label{sec:thegame}

We consider a combinatorial game that appears to be the core of several existence proofs. 

Alice and Bob play an infinite game on the $k$-ary rooted tree. Initially Bob is in the root of this tree. His goal is to escape (to infinity) along some branch of the tree, one edge at a time. Alice tries to block Bob by declaring some tree vertices as \emph{forbidden} for him. 

Alice and Bob alternate; Alice starts the game.  At each step Alice adds some vertices to the set $F$ of forbidden vertices (initially empty); then Bob moves along some tree edge (moving from a vertex to its child). Bob loses if he gets into a forbidden vertex. Alice loses if Bob never gets into a forbidden vertex. Of course, without additional restrictions Alice wins the game: she may forbid all vertices, or all children of current Bob's vertex, or all vertices of some level ahead of Bob. To make the game non-trivial, we will require that Alice does not forbid too many vertices too close to Bob.

Before formulating the restrictions, let us note that the game can be reformulated as follows. First, only the subtree rooted at Bob's current position matters (other vertices are inaccessible to Bob anyway). So we may think that at his turn Bob chooses an \emph{available} ($=$ not forbidden) child of the current position and the game continues on the corresponding subtree. At her turn Alice adds some vertices of this subtree to $F$ (making them \emph{unavailable} to Bob); the vertices forbidden earlier remain forbidden.  Alice wins if at some point Bob has no available child to choose from, and Bob wins otherwise. (We also assume that Alice cannot forbid Bob's current position, i.e., the root of the current subtree.)

To formulate the restrictions for Alice, let us fix some $\beta$ in the range $(1,k)$. Each vertex of the $l$-th level of the $k$-ary rooted tree has weight $\beta^{-l}$ (so the root has weight $1$, each its child has weight $1/\beta$, etc., cf. the definition of Hausdorff's outer measure). The weight of a set of vertices is the sum of their weights. The restriction for Alice is as follows: \emph{for every move the weight of the newly forbidden vertices, measured in the current subtree, does not exceed some constant $\omega$}. In this way, for every choice of parameters $\beta$ and $\omega$, we get a game on a $k$-ary tree called $(\beta,\omega,k)$-game in the sequel.

\begin{theorem}\label{thm:thegame}
Assume that for some integer $k>1$, some $\beta$ in $(1,k)$ and some $\omega>0$ the inequality
\begin{equation}\label{maincond}
\beta (1+\omega) \leqslant k
\end{equation}
holds. Then Bob has a winning strategy in the $(\beta,\omega,k)$-game.
\end{theorem}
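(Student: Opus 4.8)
The plan is to give Bob an explicit greedy strategy and to control it with a single potential function, namely the weight of the forbidden set ``as seen from Bob's current vertex.'' Concretely, for any vertex $v$ let $\mu(v)$ denote the total weight of $F\cap T_v$ computed \emph{inside} the subtree $T_v$ rooted at $v$ (so a forbidden vertex $l$ levels below $v$ contributes $\beta^{-l}$, and $v$ itself, if it were forbidden, would contribute $1$); this quantity only increases as the game proceeds. Bob's strategy will be: always step to the child $c$ of his current vertex for which the current value of $\mu(c)$ is smallest. I would then prove that this strategy maintains the invariant $\mu(\text{Bob's vertex})<1$ immediately after each of Bob's moves, which at the start is clear since $F$ is empty.

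The key identity to establish first is the scaling relation between a vertex and its children: if $v\notin F$ has children $c_1,\dots,c_k$, then $T_v$ is the disjoint union of the $T_{c_i}$ and a vertex $l$ levels below $c_i$ is $l+1$ levels below $v$, so $\mu(v)=\tfrac1\beta\sum_{i=1}^k\mu(c_i)$, i.e.\ $\sum_i\mu(c_i)=\beta\,\mu(v)$. Granting the invariant before Alice's move (Bob at $v$, $\mu(v)<1$), Alice can raise $\mu(v)$ by at most $\omega$, so after her move $\mu(v)<1+\omega\le k/\beta$ by~\eqref{maincond}; hence $\sum_i\mu(c_i)<k$ and some child $c$ has $\mu(c)<1$. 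Such a $c$ is automatically not forbidden (a forbidden vertex contributes $1$ to its own subtree, forcing $\mu\ge1$), so Bob's move to $c$ is legal, and the invariant is restored. Since $F$ is unchanged between Bob's move and Alice's next move, the invariant persists forever, so Bob never enters a forbidden vertex and therefore wins.

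The step I expect to require the most care is the use of~\eqref{maincond} in its borderline form $\beta(1+\omega)=k$: the argument only goes through if the bound $\mu(v)<1$ is kept \emph{strictly} throughout the induction (with the empty initial $F$ supplying the initial strictness), since from a merely non-strict $\mu(v)\le 1$ one could not rule out that Alice forbids all $k$ children at once. The remaining work --- checking that Alice's weight constraint is exactly the statement ``$\mu$ of Bob's vertex increases by at most $\omega$'', that Bob's vertex is never in $F$ so that the scaling identity applies, and that the minimum over the finitely many children is attained --- is routine bookkeeping about the $\beta$-rescaling of weights between consecutive levels.
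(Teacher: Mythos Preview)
Your proposal is correct and is essentially the paper's own argument: maintain the invariant that the $\beta$-weight of the forbidden set in Bob's current subtree is strictly less than~$1$, use pigeonhole over the $k$ children, and rescale by~$\beta$ on descent. Your explicit scaling identity $\sum_i\mu(c_i)=\beta\,\mu(v)$ and your remark on why strictness is essential in the borderline case $\beta(1+\omega)=k$ are exactly what the paper does implicitly.
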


\begin{proof}
The winning strategy for Bob is to maintain the following invariant: after his move (and before the next move of Alice) \emph{the total weight of the forbidden vertices in the current subtree} (rooted at Bob's position) \emph{is less than $1$}. (This implies that the current position is not forbidden.)

Initially (before the first move of Alice) this is true, since the weight is $0$. Let us check that Bob can maintain this invariant. After Alice adds new forbidden vertices (with total weight at most $\omega$) the total weight of the forbidden vertices in the current subtree is less than $1+\omega$, and the root of the subtree is not forbidden. So all the forbidden vertices are split among $k$ children of the root, and there is a child whose share is less than $(1+\omega)/k$. Bob moves to this child; this increases all the weights in its subtree by factor $\beta$ since the obstacles become one step closer,  and we note that
\(
\beta(1+\omega)/k \le 1
\)
by assumption.
\end{proof}

We can use a computable version of this result to construct a computable path in the tree by playing against the computable strategy for Alice (for computable $\beta$ and $\omega$). More precisely, the next move of Alice should consist of programs that compute
\begin{itemize}
\item a set $F$ of newly forbidden vertices;
\item a modulus of convergence for their total weight.
\end{itemize}
The first program says for each vertex whether it becomes forbidden, and the second program gives for every rational $\varepsilon>0$ some integer $N$ such that the total weight of all $F$-vertices of height greater than $N$ is less than $\varepsilon$.

A special case: Alice explicitly gives a finite list of newly forbidden vertices.

\begin{theorem}\label{thm:computablegame}
Assume that $\beta$ and $\omega$ are computable real numbers satisfying the same condition as in Theorem~\ref{thm:thegame}, and Alice uses a computable strategy. Then Bob can win against this strategy using a computable path in the $k$-ary tree.
\end{theorem}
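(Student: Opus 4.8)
The plan is to let Bob play exactly the weight-balancing strategy from the proof of Theorem~\ref{thm:thegame}, but to run it with only finitely much information at each step and with an \emph{effective} form of the invariant. So, in addition to keeping the total forbidden weight of the current subtree strictly below~$1$ (which, as in Theorem~\ref{thm:thegame}, keeps Bob's position out of $F$), Bob will maintain a \emph{rational} number $b_t<1$ that he has explicitly computed and that he knows to bound this weight after his $t$-th move (with $b_0=0$). The point of the rational bound is that it provides a \emph{positive computable} amount of slack at every step: since $b_{t-1}<1$ \emph{strictly}, the real number $\Delta:=1/\beta-(b_{t-1}+\omega)/k$ is positive (using $(1+\omega)/k\le 1/\beta$, which is~\eqref{maincond}) and computable, so Bob can compute a positive rational $\Delta_0\le\Delta$; set $\delta:=\Delta_0/4$. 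This slack is available even when~\eqref{maincond} is an equality, precisely because it is extracted from the invariant rather than from the main inequality.

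For Bob's $t$-th move, suppose Alice has played moves $1,\dots,t$, her $j$-th move being a (total) program deciding membership in the newly forbidden set $F_j$ together with a program computing a modulus of convergence $\mu_j$ for the weight of $F_j$ measured in the subtree that was current at move~$j$. Since then Bob has descended $d_j$ edges, which multiplies the weight of every surviving $F_j$-vertex by $\beta^{d_j}$; so, choosing for each $j$ a positive rational $\varepsilon_j$ with $\beta^{d_j}\varepsilon_j<\delta/t$ (possible since $\beta$ is computable) and setting $N:=\max_j\bigl(\mu_j(\varepsilon_j)-d_j\bigr)$, Bob knows that the forbidden vertices of depth $>N$ in the current subtree have total weight $<\delta$ there. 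The current subtree truncated at depth~$N$ is finite, so Bob enumerates its vertices, tests each against the finitely many membership programs, and computes, for each of the $k$ children $c_i$ of the current root, a rational $\tilde w_i$ with $w_i\le\tilde w_i+2\delta$ and $\tilde w_i\le w_i+\delta$, where $w_i$ is the (unknown) total forbidden weight of the subtree of~$c_i$. Since the current root is not forbidden (by the invariant and because Alice cannot forbid Bob's vertex), $\sum_i w_i\le b_{t-1}+\omega$, whence $\min_i\tilde w_i\le(b_{t-1}+\omega)/k+\delta=1/\beta-\Delta+\delta\le 1/\beta-3\delta$. Bob therefore moves to a child $c_{i^\ast}$ with $\tilde w_{i^\ast}<1/\beta-2\delta$; after the move the total forbidden weight of the new subtree equals $\beta w_{i^\ast}\le\beta(\tilde w_{i^\ast}+2\delta)<1$, and since $\beta(\tilde w_{i^\ast}+2\delta)$ is a computable real strictly below~$1$, Bob can compute a rational $b_t$ with $\beta(\tilde w_{i^\ast}+2\delta)\le b_t<1$, restoring the invariant. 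Interleaving this procedure with Alice's computable strategy (which step by step emits the programs Bob consumes) is a single algorithm that outputs Bob's vertices one after another, i.e.\ a computable branch of the tree avoiding~$F$.

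The part I expect to be most delicate is the bookkeeping in the second step: Alice's modulus of convergence is stated relative to the subtree that was current \emph{at the moment she played it}, so before it is usable it must be rescaled by the factor $\beta^{d_j}$ accumulated since then and the contributions of all past moves must be combined; and throughout, every inequality has to be made effective, because $\beta$, $\omega$, $1/\beta$ and their combinations are only computable reals, not rationals --- so ``$x>0$'' is replaced by ``compute a positive rational below $x$'' and ``$x<1$'' by ``compute a rational upper bound for $x$ that is still below~$1$''. It is also worth recording the (mild) reading of the hypothesis under which the ``forbidden'' programs are total decision procedures, as ``says for each vertex whether it becomes forbidden'' suggests, and the conceptual point already emphasized: the slack absorbing the approximation errors comes from the strict inequality $b_{t-1}<1$ built into the invariant, not from~\eqref{maincond}, which is what makes the argument work even in the equality case.
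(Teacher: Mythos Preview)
Your proof is correct and follows essentially the same approach as the paper's: Bob replays the weight-balancing strategy of Theorem~\ref{thm:thegame}, and the \emph{strict} inequality in the invariant supplies the slack that makes each child-comparison effective. The paper simply says Bob ``keeps the total weight of the forbidden vertices in the subtree as a computable number'' and ``computes the weights for the children until he finds a good child (note that the inequality is strict, so such a child can be found effectively)''; you have unpacked this into explicit rational bounds $b_t$, an explicit slack $\delta$ derived from $1-b_{t-1}$, and explicit use of the moduli $\mu_j$ (with the $\beta^{d_j}$ rescaling) to truncate to a finite depth---but the underlying idea is identical.
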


\begin{proof}
Bob can maintain the invariant as before, keeping the total weight of the forbidden vertices in the subtree as a computable number. Since Alice behaves computably (in the sense explained above), Bob can update this data after her move. Then he computes the weights for the children until he finds a good child (note than the inequality is strict, so such a child can be found effectively if it exists).
\end{proof}

Note that, since the only thing needed to compute Bob strategy is to be able to compute the weight of the set of forbidden vertices, if we can do so quickly then we can quickly compute Bob's move.

\section{Miller's argument}\label{sec:miller}
The idea behind this technique of $\beta$-weights was first used by J.~Miller~\cite{Miller2012} to prove the existence of an infinite sequence avoiding some set of forbidden factors (contiguous substrings). Let us present his proof as an application of Theorem~\ref{thm:thegame}.

\begin{theorem}\label{thm:miller}
  Let $\A$ be a finite alphabet and let $\F\subseteq\A^+$ be a set of $\A$-words. Suppose there exists $\beta$ in $(1,|\A|)$ such that 
  \[
     \beta\left(1 + \sum_{f\in\F}\beta^{-|f|}\right)\le |\A|\,.
  \]
  Then there exists an infinite sequence in $\A^\mathbb{N}$ that avoids $\F$.
\end{theorem}

\begin{proof}
Let $k$ be the size of $\A$. Consider the $k$-ary tree where children are indexed by letters. Bob constructs a path in this tree, and at every move Alice forbids all elements of $\F$ starting from Bob's position.
\end{proof}

The computable version can be obtained in the same way using Theorem~\ref{thm:computablegame}:

\begin{theorem}
  Let $\A$ be a finite alphabet and let $\F\subseteq\A^+$ be a computable set of $\A$-words. Suppose there exists a computable $\beta\in (1,k)$ such that 
  \[
    \beta\left(1 + \sum_{f\in\F}\beta^{-|f|}\right)\le |\A|\,
  \]
 and the series in the left hand side converges computably. Then there exists an infinite computable sequence in $\A^\mathbb{N}$ that avoids $\F$.
 \end{theorem}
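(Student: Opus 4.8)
The plan is to repeat the proof of Theorem~\ref{thm:miller} almost verbatim, but to invoke Theorem~\ref{thm:computablegame} in place of Theorem~\ref{thm:thegame}, and then to check that the strategy we let Alice use is computable in the precise sense described just before that theorem. We may assume $\F\neq\emptyset$, since otherwise any sequence works. Let $k=|\A|$ and identify the vertices of the $k$-ary tree (edges labelled by the letters of $\A$) with finite $\A$-words. Put $\omega=\sum_{f\in\F}\beta^{-|f|}$. Since $\F$ is computable, $\beta$ is a computable real, and the series converges computably (use the modulus to bound the tail, then sum the finitely many remaining terms with known lengths), $\omega$ is a positive computable real; and by hypothesis $\beta(1+\omega)\le k$, so Theorem~\ref{thm:computablegame} is applicable to the $(\beta,\omega,k)$-game with these computable parameters.

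Next I would spell out Alice's strategy and the weight bound exactly as in Theorem~\ref{thm:miller}: when Bob stands at a vertex $w$, Alice forbids precisely the vertices $wf$ for $f\in\F$. In the subtree rooted at $w$ the vertex $wf$ lies at height $|f|$, hence has weight $\beta^{-|f|}$ there; as $f\mapsto wf$ is injective, the weight of Alice's move equals $\sum_{f\in\F}\beta^{-|f|}=\omega$, so she obeys the rule of the game. Since $\F\subseteq\A^{+}$, every $wf$ is a proper descendant of $w$, so Alice never forbids Bob's current vertex.

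Then I would verify computability of this strategy against the two requirements stated before Theorem~\ref{thm:computablegame}. First, the set of newly forbidden vertices is decidable: given $w$ and a candidate vertex $u$, test whether $w$ is a prefix of $u$ and whether the remaining suffix lies in $\F$, which is decidable because $\F$ is. Second, a modulus of convergence for the total weight of the move is supplied directly by the computable convergence of $\sum_{f\in\F}\beta^{-|f|}$: given a rational $\varepsilon>0$ it returns an $N$ with $\sum_{f\in\F,\,|f|>N}\beta^{-|f|}<\varepsilon$, i.e. the forbidden vertices of height greater than $N$ in the current subtree carry total weight below $\varepsilon$. Crucially this modulus does not depend on $w$, since the height of $wf$ in the subtree rooted at $w$ is always $|f|$; this uniformity is exactly what makes the whole strategy of Alice computable, and it is the one point I would be careful to state explicitly.

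Finally, Theorem~\ref{thm:computablegame} yields a computable path $\sigma\in\A^{\mathbb{N}}$ for Bob that never enters a forbidden vertex, and I would close by observing that such a $\sigma$ avoids $\F$. Indeed, if a copy of some $f\in\F$ occurred in $\sigma$ starting at position $a$, then the length-$(a+|f|)$ prefix of $\sigma$ would be $w_{a}f$, where $w_{a}$ is the length-$a$ prefix of $\sigma$; but that vertex was declared forbidden by Alice at the move she made while Bob was at $w_{a}$, contradicting the fact that Bob never meets a forbidden vertex. Hence $\sigma$ is a computable sequence avoiding $\F$, as required. I do not expect a genuine obstacle here: the argument is a routine transfer of the proof of Theorem~\ref{thm:miller} through the computable version of the game, and the only delicate ingredient is the bookkeeping ensuring that Alice's moves, together with their moduli of convergence, are produced uniformly and effectively — which is precisely why the game normalises weights relative to the current subtree rather than to the root.
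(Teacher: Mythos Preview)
Your proposal is correct and follows exactly the approach of the paper, which simply states that the computable version ``can be obtained in the same way using Theorem~\ref{thm:computablegame}'' without further details. You have spelled out carefully the verifications (decidability of the newly forbidden set, uniform modulus of convergence independent of $w$) that the paper leaves implicit, but the underlying argument is identical.
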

 
 The construction given by Miller~\cite[Proposition 2.1]{Miller2012} is exactly the same (he uses letter $S$ instead of $\F$, as well as $n$ instead of $k$, and $c$ instead of $1/\beta$ in our notation). However, extracting a combinatorial game from this argument allows us to apply the same technique to several other questions considered in the following sections.
  
\section{Nonrepetitive words}\label{sec:nonrepetitive}
A \emph{square} is a word of the form $uu$ with $u$ a non-empty word, and a square-free word is a word that contains no squares as factors. For instance, \texttt{hotshots} is a square, \texttt{infinite} is square-free and \texttt{repetition} contains the square \texttt{titi}. In 1906, Thue proved that there exists an infinite square-free word over a three-letter alphabet ~\cite{thue06}.
This result is often called the first result in combinatorics on words and many its variants have been studied. Using Theorem \ref{thm:thegame}, we can construct square-free sequences for all~$k\ge4$. This is still weaker than Thue's result, but our argument can in fact be applied to the more general case where the alphabet is large (even infinite), but only four letters are allowed for each position.

More formally, consider an infinite alphabet $S$ and some \emph{$k$-list assignment}, i.e., a function $\alpha: \mathbb{N}\to \mathcal{P}(S)$ that for every $n\in \mathbb{N}$ specifies which $k$ letters from $S$ are allowed in $n$-th position (so $|\alpha(n)|= k$). We say that a sequence $w_0w_1\ldots$ \emph{respects} $\alpha$ if $w_i\in \alpha(i)$ for all $i$. In 2011, Grytczuk, Przyby{\l}o and Zhu~\cite{Grytczuk2011Jan} proved that for any $4$-list assignment $L$ there exists an infinite square-free sequence that respects $L$; several other proofs using different tools appeared later (see Section~\ref{sec:history}). Recently a computable version of this result was proven by Mourad~\cite{Mourad2024Jun} but only for $6$-list assignments. In this section we note that these results are easy corollaries of the game and get the computable version for $4$-lists as a byproduct (thus improving a bit the result from~\cite{Mourad2024Jun}). 

\begin{theorem}\label{thm:assignment}
For every $4$-list assignment $\alpha$ there exists an infinite square-free sequence that respects $\alpha$. If the assignment is computable, then there exists a computable square-free sequence that respects $\alpha$.
\end{theorem}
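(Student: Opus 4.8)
The plan is to derive the theorem from Theorem~\ref{thm:thegame} (and its computable companion Theorem~\ref{thm:computablegame}) by playing the $(\beta,\omega,k)$-game with $k=4$, $\beta=2$ and $\omega=1$; then $\beta\in(1,4)$ and $\beta(1+\omega)=4\le k$, so condition~\eqref{maincond} holds and Bob has a winning strategy. I would run the game on the tree whose level-$\ell$ vertices are the length-$\ell$ words $w_0\cdots w_{\ell-1}$ respecting $\alpha$ on the first $\ell$ positions, i.e.\ with $w_i\in\alpha(i)$; since $|\alpha(\ell)|=4$, each vertex has exactly four children (identified with the four letters of $\alpha(\ell)$ in some fixed order), so this is a $4$-ary tree, and an infinite branch is exactly an infinite sequence respecting $\alpha$. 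It therefore suffices to let Alice forbid the vertices that would create a square and quote the theorem.

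\emph{Alice's strategy}: when Bob is at the level-$\ell$ vertex $w=w_0\cdots w_{\ell-1}$, she forbids, for each $m$ with $1\le m\le\ell$, the vertex obtained by appending to $w$ its own length-$m$ suffix $w_{\ell-m}\cdots w_{\ell-1}$ — provided the resulting word of length $\ell+m$ still respects $\alpha$ (otherwise it is not a vertex of the tree and nothing is forbidden for this $m$). Intuitively Alice pre-empts any square whose first half is Bob's current length-$m$ suffix and whose second half starts at Bob's current position: this square has a unique completing vertex, and Alice blocks it. All the forbidden vertices are proper descendants of Bob's position, so Alice never forbids Bob's current vertex; at relative depth $m$ she adds at most one vertex, of weight $\beta^{-m}=2^{-m}$ in the current subtree, so the weight of her move is at most $\sum_{m=1}^{\ell}2^{-m}=1-2^{-\ell}<1=\omega$. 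Here condition~\eqref{maincond} is tight (it forces $\beta=2$, $\omega=1$), and it is precisely the strict inequality $1-2^{-\ell}<1$ that makes the choice $\omega=1$ legitimate.

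Next I would check that Bob's winning branch $W=W_0W_1\cdots$ — which respects $\alpha$ by construction — is square-free. Suppose not: say $W_p\cdots W_{p+2m-1}$ is a square, so $W_{p+j}=W_{p+m+j}$ for $0\le j<m$ with $m\ge1$. At the moment Bob stood at level $p+m$, the length-$m$ suffix of his prefix $W_0\cdots W_{p+m-1}$ was $W_p\cdots W_{p+m-1}$, and appending it gives $W_0\cdots W_{p+m-1}W_p\cdots W_{p+m-1}=W_0\cdots W_{p+2m-1}$, which is a genuine vertex (a prefix of $W$), hence a vertex that Alice then forbade. But $W$ passes through it, contradicting that Bob wins. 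So $W$ is a square-free sequence respecting $\alpha$.

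For the computable statement: if $\alpha$ is computable, the tree is computable and Alice's move at level $\ell$ is an explicitly computed finite list of at most $\ell$ vertices — the special case noted after Theorem~\ref{thm:computablegame}; since $\beta=2$ and $\omega=1$ are computable, that theorem supplies a computable winning branch for Bob, i.e.\ a computable square-free sequence respecting $\alpha$. I expect the one delicate point to be the bookkeeping behind Alice's strategy — namely that a potential square gets pinned to a single tree vertex exactly when Bob reaches the start of its second half, so Alice can (and, to keep $\omega=1$, must) block it at that precise step — together with the harmless degenerate case where the would-be completing word leaves the tree, in which case no $\alpha$-respecting sequence contains that square anyway.
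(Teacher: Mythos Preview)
Your proposal is correct and follows essentially the same approach as the paper: the same $4$-ary tree of $\alpha$-respecting prefixes, the same Alice strategy of forbidding $wv$ for each non-empty suffix $v$ of Bob's current word $w$, and the same parameters $\beta=2$, $\omega=1$ plugged into Theorems~\ref{thm:thegame} and~\ref{thm:computablegame}. You supply more detail than the paper does (the explicit check that Bob's branch is square-free, the handling of would-be extensions that leave the tree, and the finite-list observation for the computable case), but the argument is the same.
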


\begin{proof}
We consider the game over the $4$-ary tree that describes which letters can be added at each step according to $\alpha$. Every vertex at level $d$ has $4$ children labeled with letters in $\alpha(d)$. Vertices of this tree correspond to $S$-words that satisfy $\alpha$-restrictions, and infinite paths are sequences respecting~$\alpha$. 

Consider the following strategy for Alice. When Bob's position is some word $w$, she forbids all the vertices corresponding to words $wv$ for all $v$ that are non-empty suffixes of $w$ (thus preventing $vv$ from appearing in the sequence). In this way every possible non-empty square will be forbidden by Alice at the moment when the first half of this square appears.

To show that Bob has a winning strategy, let  $\beta=2$. The $\beta$-weight of Alice's move is at most 
\[
\frac{1}{2} + \frac{1}{4} + \ldots < 1
\]
since Alice forbids at most one string of each positive length. For $\omega=1$ the conditions of Theorems~\ref{thm:thegame} and~\ref{thm:computablegame} are fulfilled; indeed, $2(1+1)\le 4$.
\end{proof}

\begin{remark}
  In our setting for every position there are four letters allowed in this position. The same argument can be applied if the set of four allowed letters depends on the previous letters and not only of the height of the vertex (i.e., when we consider a $4$-ary tree whose edges are labeled by letters from~$S$).
\end{remark}  

Note that after $n$ steps there are at most $O(n^2)$ forbidden vertices in the current subtree, and they can easily be found in polynomial time in $n$ as long as the list of each position can be computed in polynomial time. Under this condition, the square-free sequence that respects the list assignment can be computed in polynomial (in~$n$) time.

\section{Separated occurrences}
The following result proven by Mourad~\cite{Mourad2024Jun} is a computable version of a result of Beck~\cite{Beck1984Jan} that was the historically first application of the Lovász Local Lemma to sequences. It can be seen as a generalization of Thue's result: now two occurrences of the same string not only cannot follow each other, but have to be far apart. 

\begin{theorem}[\cite{Mourad2024Jun}]\label{thm:computablebeck}
For every $c<2$ there exist a constant $N$ and a computable infinite binary sequence $\alpha$ such that for every string $x$ of length $n\ge N$ any two occurrences of $x$ in $\alpha$ are far away: the distance between them is at least~$c^n$.
\end{theorem}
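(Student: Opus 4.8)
The plan is to cast this as an instance of the $(\beta,\omega,k)$-game with $k=2$, mirroring the square-free argument from Section~\ref{sec:nonrepetitive}. Bob builds an infinite binary sequence along the binary tree (so $k=2$); at each step, when Bob's current position is a word $w$ of length $m$, Alice forbids, for every string $x$ with $|x|\ge N$ that already occurs as a factor of $w$, every vertex $wv$ such that a new occurrence of $x$ would be created within distance less than $c^{|x|}$ of a previous occurrence. Concretely, if $x$ occurs in $w$ ending at position $i$, Alice forbids every extension of $w$ in which $x$ reoccurs ending at some position $j$ with $i<j<i+c^{|x|}$; the earliest such reoccurrence is pinned down once we know the two endpoints, so each ``bad continuation'' is a single vertex at a controlled depth below $w$. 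This way, any binary sequence that Bob successfully escapes along has the property that all occurrences of any fixed $x$ with $|x|\ge N$ are at pairwise distance at least $c^{|x|}$, which is exactly the conclusion (with ``length $n\ge N$'' read as ``$n\ge N$'').

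Next I would bound the $\beta$-weight of a single Alice move and choose the parameters. Fix $\beta\in(1,2)$ close to $2$. A forbidden vertex corresponding to a pair (occurrence of $x$ ending at $i$ in $w$, forced reoccurrence ending at $j$) sits at depth $j-m$ below $w$ where $m=|w|$, with $j<i+c^{|x|}\le m+c^{|x|}$; but more usefully I would index forbidden vertices by the length $n=|x|$ and the position of the new copy relative to the end of $w$. For each $n\ge N$, the number of distinct substrings of length $n$ occurring in $w$ is at most $2^n$ (and at most $m$, but $2^n$ suffices), each such occurrence can force reoccurrences at fewer than $c^n$ positions, and each forced reoccurrence is a vertex at depth at least $1$; grouping by the depth $d$ of the forced vertex, a substring of length $n$ ending near the end of $w$ that reoccurs $d$ steps later is determined by its last $n-d$ letters once $n>d$, so there are at most $2^{\min(n,d)}$ such vertices of depth $d$ and length $n$. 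Summing $\beta^{-d}$ over these, the total weight is at most $\sum_{n\ge N}\sum_{d\ge 1,\, d<c^n} 2^{\min(n,d)}\beta^{-d}$, which is a convergent double sum that can be made smaller than any prescribed $\omega>0$ by taking $N$ large, provided $\beta$ is chosen with $2/\beta>1$ strictly — here the $2^{\min(n,d)}\beta^{-d}$ terms are summable in $d$ because eventually $d\le c^n< $ something still small compared to where $2^d$ would beat $\beta^{-d}$... this is the delicate point and I address it in the next paragraph. Once the weight is below $\omega$ with $\beta(1+\omega)\le 2$ — achievable since $\beta<2$ and $\omega$ can be taken arbitrarily small by enlarging $N$ — Theorem~\ref{thm:thegame} gives Bob a winning (and, by Theorem~\ref{thm:computablegame}, computable, since $\F$ is decidable and the weight converges computably) strategy.

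The main obstacle is the counting/weight estimate, and in particular making it work for every $c<2$ rather than just small $c$. The subtlety is that when $x$ has length $n$ and reoccurs at distance $d$ with $d<n$, the two occurrences overlap, so $x$ must be $d$-periodic and is therefore determined by only $d$ letters, giving at most $2^d$ choices; this keeps $2^{d}\beta^{-d}=(2/\beta)^d$ under control only if we also exploit that we only need $d<c^n$, so for fixed $d$ the lengths $n$ contributing satisfy $n>\log_c d$, and for the non-overlapping case $d\ge n$ we instead have at most $2^n$ substrings but $\beta^{-d}\le\beta^{-n}$, so those terms are $(2/\beta)^{-?}$... I would reorganize as $\sum_{d\ge 1}\beta^{-d}\bigl(\sum_{n:\,d<c^n}\#\{\text{length-}n\text{ substrings forcing depth }d\}\bigr)$ and bound the inner count by $2^{\min(n,d)}$, splitting at $n=d$: the $n\le d$ part contributes $\le d\cdot 2^n$-type terms dominated by $d\,2^{d}\beta^{-d}$ which converges, and the $n>d$ part contributes $\le 2^d\beta^{-d}$ times the number of admissible $n$, which is finite because $d<c^n$ forces $n>\log_c d$ but we need an upper cutoff on $n$ — and indeed there is none a priori, so here one must instead bound by $2^d$ (periodicity) uniformly and observe that the number of $n>\log_c d$ is infinite, which breaks the naive bound. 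The genuine fix, which I expect the authors use, is that for each \emph{fixed} pair of occurrence positions $(i,j)$ in $w$ the forbidden vertex is unique regardless of $n$, so one should sum over pairs $(i,j)$ with $0\le i<j\le m$ and $j-i<c^{n}$ where $n=j-i$'s... rather: Alice forbids, for each pair of positions $i<j$ in the \emph{extended} word with $j-i=:d$, the extension creating equal length-$n$ factors at those positions for the largest $n$ with $d<c^n$ only when such $n\ge N$ exists, i.e.\ when $d<c^{?}$; the vertex is at depth $j-m\le d$, and for a given depth $D$ there are at most (number of valid $d\le$ something) such vertices, each of weight $\beta^{-D}$, and crucially the constraint $d<c^n$ with $n\le d$ (overlap) forces $d$ large only logarithmically, so only finitely many $d$ are ever relevant for each starting offset. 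Pinning down this bookkeeping so that the per-move weight is a finite, explicitly convergent sum tending to $0$ as $N\to\infty$ is the crux; everything else is a direct invocation of Theorems~\ref{thm:thegame} and~\ref{thm:computablegame}.
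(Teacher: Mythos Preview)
Your framework is right (binary tree, Bob builds $\alpha$, Alice forbids bad continuations, invoke Theorems~\ref{thm:thegame} and~\ref{thm:computablegame}), but the weight bound is genuinely missing, and the reason is that your Alice is doing too much at once. You have her forbid, at the moment $w$ is reached, \emph{every} future reoccurrence of \emph{every} long factor of $w$ at \emph{every} admissible depth. This forces you into the double sum $\sum_{n\ge N}\sum_{d<c^n}2^{\min(n,d)}\beta^{-d}$, and you correctly diagnose that it diverges: for each fixed $d$ there are infinitely many $n$ with $d<c^n$, each contributing $(2/\beta)^d>1$. Your attempted repairs (indexing by pairs of positions, invoking periodicity) do not close the gap, and the proposal ends by conceding that the bookkeeping is the crux without actually carrying it out.

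The missing idea is that Alice should postpone each prohibition until the last moment. At the step where Bob's prefix has length $m$, Alice forbids only those bad reoccurrences that would \emph{start at position $m$}: for each $n\ge N$ and each earlier starting position $i$ with $m-i<c^n$, she forbids the single vertex at depth exactly $n$ that repeats the length-$n$ substring starting at $i$. There are fewer than $c^n$ such positions $i$, hence at most $c^n$ forbidden vertices at depth $n$, and the per-move weight is at most $\sum_{n\ge N}(c/\beta)^n$, a tail of a convergent geometric series once $\beta\in(c,2)$. Choosing $N$ large makes this smaller than $2/\beta-1$, so $\beta(1+\omega)\le2$ holds. The only wrinkle is that when $m-i<n$ the substring to be forbidden is not yet fully written, but periodicity (the overlap forces the missing bits) lets Alice reconstruct it; this is the one place your periodicity observation is actually needed, and it is needed only to \emph{define} the forbidden vertex, not to rescue the count.
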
 

The distance between two occurrences starting at positions $i$ and $j$ is defined as $|i-j|$. Note that $c$ cannot be greater than $2$ since then we require more than $2^n$ substrings of length $n$ to be different.

\begin{proof}[Proof of Theorem \ref{thm:computablebeck}]

This result is also an easy application of the combinatorial game of Section~\ref{sec:thegame}. Bob constructs $\alpha$ bit by bit, and Alice does not allow $\alpha$ to violate the statement of the theorem. Namely, if $x$ is a substring already present in~$\alpha$ (not too long ago), Alice prevents $x$ from appearing again (starting from the current position). 
\begin{center}
\includegraphics{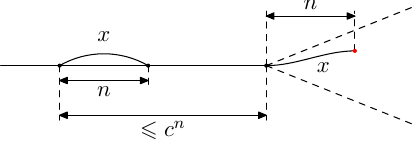}
\end{center}
In this way each move of Alice forbids at most $c^n$ vertices of level $n$ (for all $n\ge N$), and we know that Bob can win if
\[
\beta \left(1 + \sum_{n\ge N}\frac{c^n}{\beta^n}\right)\le 2
\]
for some $\beta\in (1,2)$. Note that this condition is satisfied if $\beta$ is chosen arbitrarily in $(c,2)$, and $N$ is large enough (so the tail of the geometric series is small).

This would finish the proof if not one minor problem that needs to be dealt with: the distance between two potential occurrences of $x$ may be small and Alice does not have at hand the string that she needs to prevent (the end of this string is not determined yet). 

\begin{center}
\includegraphics{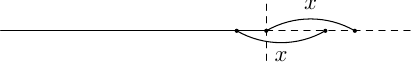}
\end{center}

Still she can reconstruct the string to be avoided: choosing some bit in the right occurrence of $x$, she already knows the corresponding bit in the left occurrence (either it was in the already constructed part of $\alpha$, or it was already chosen previously in the context of right occurrence).
\end{proof}

If we want to know how $N$ depends on $c$, we need to choose the optimal value of $\beta$ that makes the left hand side of the inequality minimal.
Note also that, as in the previous section, the result holds for the list setting and even if the lists are chosen by Alice dynamically. 

In this case, every forbidden vertex is uniquely determined by 3 positions from the current word (the start and end of the first occurrence of $x$, and the start of the second occurrence of $x$). So the number of forbidden vertices in the current subtree is bounded by a polynomial in the number of steps. We may assume without loss of generality that $c$ is a rational number (it can be increased if necessary), so the $n$-bit prefix of the sequence we construct can be computed polynomially (in $n$). 

\section{Making adjacent blocks very different}

Mourad also considered another computable generalization of Thue's result (non-computable version was first mentioned as an exercise in \cite{ProbabilisticMethod}).
\begin{theorem}[\cite{Mourad2024Jun}]\label{thm:computableExercise}
For any  $\varepsilon>0$, there is some $N$ and a computable infinite binary sequence $\alpha$ such that any two adjacent blocks in $\alpha$ of length $n\ge N$ differ in at least $(1/2-\varepsilon)n$ places. 
\end{theorem}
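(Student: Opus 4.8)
The plan is to apply the combinatorial game of Section~\ref{sec:thegame} on the binary tree ($k=2$); we may assume $0<\varepsilon<1/2$, the other cases being trivial. Bob constructs $\alpha$ bit by bit, and Alice forbids the vertices whose prefixes would create two adjacent blocks of length $n\ge N$ that are too close. Concretely, when Bob's position is a word $w=w_1\dots w_\ell$, for every $n$ with $N\le n\le\ell$ Alice looks at the length-$n$ suffix $B=w_{\ell-n+1}\dots w_\ell$ of $w$ (a ``first block'' that has just been completed) and forbids every continuation $wx$ with $|x|=n$ such that the Hamming distance $d(B,x)$ is smaller than $(1/2-\varepsilon)n$. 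Since Bob's position increases by one at every turn, a pair of adjacent length-$n$ blocks occupying positions $[i,i+2n-1]$ with $n\ge N$ is dealt with at the turn when Bob sits at position $i+n-1$: at that moment its first block is exactly the suffix $B$, its second block lies entirely in the current subtree, and the two blocks are disjoint. Hence, if Bob never enters a forbidden vertex, the limit sequence $\alpha$ satisfies the required property. Note that, in contrast with the proof of Theorem~\ref{thm:computablebeck}, no ``reconstruction'' subtlety arises here: the first block is fully determined before Alice acts, and adjacent blocks do not overlap.

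It remains to verify the weight condition~\eqref{maincond}. The number of binary strings of length $n$ lying within Hamming distance $(1/2-\varepsilon)n$ of a fixed string is at most $2^{H(1/2-\varepsilon)\,n}$, by the standard entropy bound on the volume of a Hamming ball, where $H$ denotes the binary entropy function. Since $\varepsilon>0$ we have $H(1/2-\varepsilon)<1$, so this quantity is at most $\gamma^n$ for some rational $\gamma\in(1,2)$. Thus at relative level $n$ Alice forbids at most $\gamma^n$ vertices of the current subtree, and the $\beta$-weight of her move is at most $\sum_{n\ge N}(\gamma/\beta)^n$. Fixing any rational $\beta\in(\gamma,2)$ and then taking $N$ large enough makes this tail smaller than $2/\beta-1>0$, so that $\beta\bigl(1+\sum_{n\ge N}(\gamma/\beta)^n\bigr)\le 2$. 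By Theorem~\ref{thm:thegame}, Bob has a winning strategy, which produces the desired $\alpha$.

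For the computable version we invoke Theorem~\ref{thm:computablegame}: $\beta$ and $\gamma$ are rational, at each step Alice forbids an explicit finite set of vertices that is computable from $w$, and the total weight of her move is bounded by a geometric tail, hence converges computably. Therefore Bob has a computable winning strategy, producing a computable $\alpha$. The only genuine content beyond bookkeeping is the counting estimate---that a Hamming ball of radius $(1/2-\varepsilon)n$ contains only $\gamma^n$ points for some $\gamma<2$---and this is precisely where the hypothesis $\varepsilon>0$ is used, since it is what lets $\beta$ be chosen strictly below $2=k$. If one wants the explicit dependence of $N$ on $\varepsilon$, one optimizes the choice of $\beta$ in $(\gamma,2)$ so as to minimize the left-hand side of~\eqref{maincond}, exactly as in the remark following Theorem~\ref{thm:computablebeck}.
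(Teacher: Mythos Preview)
Your proposal is correct and follows essentially the same route as the paper: play the $(\beta,\omega,2)$-game, have Alice forbid at each turn, for every length $n\ge N$, the continuations that lie in the Hamming ball of radius $(1/2-\varepsilon)n$ around the current length-$n$ suffix, and bound the weight by a geometric tail $\sum_{n\ge N}(\gamma/\beta)^n$. The only cosmetic difference is that you invoke the entropy bound $|\text{ball}|\le 2^{H(1/2-\varepsilon)n}$ explicitly and then pass to a rational $\gamma\in(1,2)$, whereas the paper simply cites Chernoff to get $c^n$ with $c<2$; these are the same estimate.
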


In the previous section we required two blocks to be different if they are not very far apart; now we consider two adjacent blocks but require them to be ``very different''. We provide a simple proof of this result based on our combinatorial game. 

\begin{proof}[Proof of Theorem \ref{thm:computableExercise}]
First we note that the fraction of $n$-bit strings where frequency of ones is less than $1/2-\varepsilon$ is exponentially small (Chernoff's bound).  So the number of $n$-bit strings that differ from a given string $x$ in less than $(1/2-\varepsilon)n$ places is bounded by $c^n$, where $c<2$ is some constant (depending on~$\varepsilon$).

Again Bob constructs $\alpha$ bit by bit and Alice forbids the undesirable continuations.  If Bob's position corresponds to a word $w$, then for all suffixes $v$ of $w$ of length $n\ge N$ Alice forbids not only the extension $v$ but all $v'$ of length $n$ that differ from $v$ in less than $(1/2-\varepsilon)n$ places (there are at most $c^n$ of them). Bob can win if
\[
 \beta \left(1+\sum_{n\ge N} \frac{c^n}{\beta^n}\right) \le 2
\]
and again for arbitrary $\beta\in(c,2)$ it happens for sufficiently large~$N$.
\end{proof}

This result is a corollary of Miller's theorem (Theorem~\ref{thm:miller}): we forbid all the factors $vv'$ where $v$ is long enough and $v'$ is close to $v$, and there are not many of them. But the game argument gives a bit more: it shows that the result holds also in the list setting, even if the lists are chosen at the last moment by Alice. It gives also a better bound on $N$ (as function of $\varepsilon$) since now we specify the forbidden extensions later (when the first half is already known).

Note that Alice adds exponentially many forbidden vertices at every step, so this algorithm does not imply immediately the existence of a polynomial time computable sequence with desired property. However, this can be easily fixed: the number of strings that differ from a given string at most by a given distance is a sum of (polynomially many) binomial coefficients and can be easily computed.  The distribution of these strings between children is also easy to compute (since the condition is again about distance). The only problem appears because the same string can be prohibited many times --- to avoid this problem, we may consider the \emph{multiset} of forbidden strings and compute the $\beta$-weights accordingly.

\smallskip

The game technique can be applied to other questions related to word combinatorics and repetitions;  it seems that many results about sequences obtained by the entropy compression technique (e.g. \cite{Grytczuk2013Mar, Ochem2014Apr,ChybowskaSokol2022Jul}), the power series method (e.g. \cite{Bell2007Sep,Rampersad2011Jun,Ochem2016Feb}) and the counting argument (e.g., \cite{Rosenfeld2021Oct}) can be proven using the game technique with the same bounds, but with the added benefit of computability (and simpler proofs). On the other hand, it is unclear whether the variant of the counting argument used in \cite{Rosenfeld2022Sep,Rosenfeld2024May} can be replaced by a game argument (it is also unclear whether the entropy-compression technique can be used in that case).

\section{Application to denominators}\label{sec:denominators}

In this section we apply the combinatorial game described in Section~\ref{sec:thegame} to a number-theoretic question and consider rational approximations with restricted denominators, giving a new proof of results from~\cite{Mourad2024Jun,Peres2010Apr} (we also slightly improve the bounds from both papers and provide a computable version of the result that appeared in~\cite{Mourad2024Jun}).

Let $\theta$ be some real number. To find its best rational approximation with a given (positive integer) denominator $t$, we multiply $\theta$ by $t$ and take the closest integer. The ``quality'' of approximation can be measured by $d(t\theta,\mathbb{Z})$, the distance between $t\theta$ and the nearest integer. This distance is a number between $0$ (ideal approximation) and $1/2$ (the worst case).  We will prove that \emph{for every sparse enough set of denominators there exists a real number that is hard to approximate using denominators from this set}, and that this number can be made computable if the set of denominators is computable. 

We will use the following terminology: a real $\theta$ is \emph{$\varepsilon$-regular for denominator $t$} if $d(t\theta,\mathbb{Z})> \varepsilon$. Here $\varepsilon>0$ is some threshold, and $t$ is a positive integer; this condition means that $\theta$ is ``not too easy to approximate with denominator $t$''.  Assume that a sparse set $T$ of positive integers (allowed denominators) is given. By sparsity we mean that that for every $j$ there are only $O(1)$ integers between $2^j$ and $2^{j+1}$ that belong to $T$. Then \emph{there exists some real $\theta$ and some positive $\varepsilon$  such that $\theta$ is $\varepsilon$-regular for all denominators $t\in T$}. This question (in a slightly different form, see Section~\ref{sec:history}) was first raised by Erdős~\cite{erdosDiophantine}.

Let us start with some simple remarks. Obviously for a given $t$ and for every $\varepsilon<1/2$ there are reals that are $\varepsilon$-regular for $t$. Moreover, for small~$\varepsilon$ most reals are $\varepsilon$-regular for $t$:  the reals that are \emph{not} $\varepsilon$-regular for $t$, form stripes around $k/t$ (for integer values of $k$) of width $2\varepsilon/t$ (going $\varepsilon/t$ left and $\varepsilon/t$ right). Those stripes cover $2\varepsilon$-fraction of all points. 
\begin{center}
\includegraphics{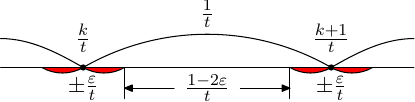}
\end{center}
The union bound then allows us to prove the existence of a real that is $\varepsilon$-regular with respect to several denominators $t_1,\ldots,t_k$, assuming that ${2k\varepsilon<1}$. 

This argument is not enough to construct a number that is $\varepsilon$-regular for infinitely many denominators (and some fixed $\varepsilon$).  And indeed it is not possible if $T$ contains all integers: the Dirichlet theorem says that for every $\varepsilon>0$ there is no real $\theta$ that is $\varepsilon$-regular for \emph{all} denominators. (Indeed, consider the sequence $n\theta \pmod 1$ and find two points $n_1\theta$ and $n_2\theta$ that are $\varepsilon$-close; then $t=n_2-n_1$ is a denominator that provides a good approximation, since $t\theta$ is $\varepsilon$-close to $0$.) This is why sparsity is needed. Informally speaking, it guarantees that the fragmentation happens on different levels: stripe patterns for small and large values of $t$ are (almost) independent, similar to the Lov\'asz local lemma (though it is not true literally:  e.g., stripes for $t$ are not independent with stripes for $2t$). We utilize this structure and construct a decreasing sequence of dyadic intervals whose common point is $\varepsilon$-regular for every $t\in T$. Here is the exact statement.

\begin{theorem}\label{thm:diophantine}
   Let $T$ be a set of positive real numbers such that 
    \[
      | T \cap (2^{j}, 2^{j+1}]| \le C
    \]
    for all integers $j$ and some constant~$C$. 
    Assume that for some $k\ge 3$ there exists $\beta\in (1,2)$ such that 
    \begin{equation*}
      \beta + \frac{3C}{\beta^{k-1}}\le2\,. \eqno(*)
    \end{equation*}
    Then there exists a real $\theta\in[0,1]$ that is $2^{-k}$-regular with respect to all $t\in T$.
    Moreover, if $T$ is computable then there exists a computable $\theta$ with this property.
\end{theorem}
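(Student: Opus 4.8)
The plan is to set up the $(\beta,\omega,2)$-game on the binary tree whose infinite branches are exactly the binary expansions of reals in $[0,1]$, and to let Bob's branch converge to the desired $\theta$. Concretely, a vertex at level $l$ is a dyadic interval of length $2^{-l}$; its two children are the left and right halves. Bob, by choosing a child at each step, produces a nested sequence of dyadic intervals whose intersection is a single real $\theta$. We want to arrange that Alice can forbid, for each denominator $t\in T$, all the dyadic intervals that are entirely contained in one of the ``bad stripes'' around $k'/t$ of width $2\cdot 2^{-k}/t$ (the reals that are not $2^{-k}$-regular for $t$). If Bob survives, his intervals always avoid these stripes, so $\theta$ avoids all bad stripes and is $2^{-k}$-regular for every $t\in T$.

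The main thing to check is that Alice can carry this out within the weight restriction, i.e. that the $\beta$-weight (measured in the current subtree, which rescales by $2^l$) of the forbidden vertices she adds for a newly ``activated'' denominator $t$ is small. First I would pin down the timing: a denominator $t\in(2^{j},2^{j+1}]$ should be ``handled'' by Alice at the moment Bob reaches level $j$ or so — this is the level at which the dyadic intervals first become small enough (length $\le 2^{-j}\le 1/t$, hence at most $2^{-k}/t$ after a few more levels) to fit inside a bad stripe of width $2^{1-k}/t$. At that moment the bad stripes for $t$ intersect the current subtree in total length at most $1$ (the whole interval), and inside the current subtree a bad stripe of absolute width $2^{1-k}/t$ has relative width $2^{1-k}/t \cdot 2^{j}\le 2^{1-k}$ (since $t>2^{j}$); the dyadic intervals contained in such a stripe, taken at the first level where they fit, have total $\beta$-weight bounded by roughly $2^{k}$ copies of level-$(k-1)$ weight, giving something like $3/\beta^{k-1}$ per stripe after accounting for the at most $\sim 2\cdot(\text{number of stripes meeting the subtree})$ — and the number of relevant stripes meeting a length-$2^{-j}$ interval is $O(1)$ since consecutive stripes for $t$ are $1/t<2^{-j}$ apart. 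Since there are at most $C$ denominators in each dyadic range, Alice handles at most $C$ of them at each level, so the weight she adds per move is at most $3C/\beta^{k-1}$. Setting $\omega = 3C/\beta^{k-1}$, the hypothesis $(*)$ is exactly $\beta(1+\omega)\le 2$, so Theorem~\ref{thm:thegame} gives Bob a winning strategy; Theorem~\ref{thm:computablegame} then gives the computable $\theta$ when $T$ is computable, since Alice's strategy (deciding, for each vertex, whether it lies in a bad stripe for some already-activated $t$) is computable and the weights converge computably.

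The step I expect to be the real obstacle — and the one requiring the careful bookkeeping the sketch above glosses over — is the precise accounting of the $\beta$-weight of the set of dyadic subintervals of the current subtree that are swallowed by the bad stripes for a single $t$, getting the clean bound $3/\beta^{k-1}$ (or better) rather than just ``$O(1)$''. This involves: choosing the exact level at which Alice forbids the stripe-intervals (too early and they don't fit in the stripe; too late and the weight has grown by too many factors of $\beta$), handling the boundary stripes that are only partially inside the subtree, and making sure the constant ``$3$'' in $(*)$ genuinely suffices (intuitively: a stripe of relative width $\le 2^{1-k}$ positioned arbitrarily is covered by dyadic intervals of level $k-1$ with total relative weight $\le 3/\beta^{k-1}$, because an interval of length $\le 2\cdot 2^{-(k-1)}$ meets at most $3$ dyadic intervals of that length). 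I would also need to double-check the subtlety already flagged in the companion proofs — that when $t$ is ``activated'' before Bob has descended far enough, Alice can still identify the stripe positions, which here is automatic since the stripes $k'/t$ are determined by $t$ alone and don't depend on Bob's future choices. Finally I would verify that $k\ge 3$ is used only to guarantee $2^{-k}<1/2$ so the notion of $2^{-k}$-regularity is non-degenerate and that the relative stripe width is $<1$, leaving room for $\beta>1$.
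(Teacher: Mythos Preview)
Your approach is exactly the paper's: play the game on the binary tree of dyadic subintervals of $[0,1]$, group $T$ into dyadic layers $P_j=T\cap(2^j,2^{j+1}]$, and at each of Bob's levels have Alice forbid, at a fixed relative depth, the small dyadic intervals touched by the bad stripes for the $\le C$ newly activated denominators, showing each denominator contributes at most~$3$. You have also correctly identified the only real work as the bookkeeping behind that ``$3$''.

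Two places where your sketch drifts from what actually makes the constant come out:
\begin{itemize}
\item You alternate between forbidding intervals \emph{entirely contained in} a bad stripe and intervals that \emph{meet} (cover) it. Only the latter works: if Alice merely forbids intervals inside the stripe, Bob can forever straddle the stripe's boundary and $\theta$ may land on it, which is still not $2^{-k}$-regular. The paper forbids the invalid intervals (those \emph{intersecting} the bad set) at one fixed level, so Bob's interval at that level lies entirely in the good set.
\item Activating $t\in(2^j,2^{j+1}]$ when Bob is at level $j$ is too early for the constant~$3$: Bob's interval has length $2^{-j}$ while the stripe spacing is only $\ge 2^{-j-1}$, so two or three stripes can meet it and you get $6$--$9$ forbidden intervals per denominator. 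The paper delays two levels (handle $t\in(2^{m-2},2^{m-1}]$ when Bob is at level $m$), so the gap between stripes, namely $(1-2^{1-k})/t\ge (3/4)\cdot 2^{1-m}>2^{-m}$, exceeds Bob's interval length and exactly one stripe is relevant; that single stripe then meets at most $3$ dyadic intervals at the chosen depth.
\end{itemize}
Finally, a small arithmetic slip: with $\omega=3C/\beta^{k-1}$ one gets $\beta(1+\omega)=\beta+3C/\beta^{k-2}$, not $(*)$. To land on $(*)$ literally the forbidden vertices should sit at relative depth~$k$ (so $\omega=3C/\beta^{k}$); the paper's own write-up has a similar off-by-one in the displayed inequality at the end of the proof, so this is cosmetic rather than a gap in the method.
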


Note that for large $k$ we can find $\beta$ that satisfies~$(*)$; moreover, for every $\beta\in (1,2)$ all sufficiently large $k$ satisfy this inequality. So indeed for every $T$ that contains at most $O(1)$ elements in every interval $(2^j,2^{j+1}]$ we can find some $\varepsilon>0$ and a real $\theta$ that is $\varepsilon$-regular with respect to all denominators $t\in T$. (To find how $\varepsilon$ depends on $C$, we should optimize the choice of $\beta$, see~Theorem~\ref{thm:bounds} below.)

\begin{proof}[Proof of Theorem~\ref{thm:diophantine}]
For any $\ell$, an \emph{$\ell$-dyadic interval} is an interval of the form $\left[\frac{u}{2^\ell},\frac{u+1}{2^\ell}\right]$ where $u$ is an integer. We consider a tree formed by dyadic intervals inside $[0,1]$. The root of this tree is the $0$-dyadic interval  $[0,1]$, and each vertex has two children (left and right half). An infinite branch in this tree is a sequence of nested dyadic intervals, and we will construct $\theta$ as the (unique) common point of such a sequence for a suitable chosen branch.

Let us split all the denominators we care about into groups:
\[
    P_j =\{t\in T \colon 2^{j} < t \le 2^{j+1}\}
\]
Here $j\ge -1$ is an integer (for $j=-1$ only $1$ can be in $P_j$). By assumption $|P_j|\le C$ for all $j$.  At every step of the construction we consider one group of denominators. Let us say that  an $\ell$-diadic interval $I$ is \emph{valid} if each of its point is $2^{-k}$-regular for all denominators $t\in P_{\ell-k}$. Note that $\ell - k$ is negative for small $\ell$ and the group $P_{\ell -k}$ may be undefined; in this case all $\ell$-diadic intervals are valid. (We will see later how this shift by $k$ helps.)

We consider the game between Alice and Bob. When Bob is at level $m$ of the tree, Alice forbids all the vertices of level $m+k-2$ that correspond to invalid dyadic intervals. (Again, we will see later why $k-2$ helps; note that according to our conventions we use denominators from  $P_{m-2}$ to define valid intervals of level $m+k-2$.)
 \begin{center}
 \includegraphics{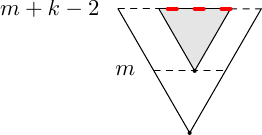}
\end{center}

\begin{lemma}\label{lem:bound}
At each step Alice forbids at most $3C$ vertices.
\end{lemma}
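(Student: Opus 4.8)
The plan is to reduce the count to a per-denominator estimate. Recall the setup: when Bob sits at a level-$m$ dyadic interval $J$, Alice forbids exactly the level-$(m+k-2)$ subintervals of $J$ that are \emph{invalid}, where ``invalid'' means containing a point that is not $2^{-k}$-regular for some $t\in P_{m-2}$. Since $|P_{m-2}|\le C$, it suffices to prove that for each \emph{single} $t\in P_{m-2}$ at most three of the level-$(m+k-2)$ subintervals of $J$ meet the bad set $B_t=\{x:d(tx,\mathbb{Z})\le 2^{-k}\}$; summing over the at most $C$ relevant denominators then gives the bound $3C$. When $m=0$ the group $P_{-2}$ is undefined and nothing is forbidden, so one may assume $m\ge 1$.

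Next I would fix such a $t$, so that $2^{m-2}<t\le 2^{m-1}$. Here $B_t$ is a union of closed ``stripes'' $[\,j/t-2^{-k}/t,\ j/t+2^{-k}/t\,]$ over integers $j$, with consecutive centers exactly $1/t$ apart. The key step is to show that \emph{at most one} stripe meets $J=[a,b]$: if two did, their centers would lie at distance at most $|J|+2\cdot 2^{-k}/t$, and using $|J|=2^{-m}$, $t>2^{m-2}$ and $k\ge 3$ this is seen to be strictly less than $2|J|=2^{1-m}\le 1/t$, contradicting the fact that distinct centers differ by at least $1/t$. Hence $B_t\cap J$ sits inside one stripe, whose length $2\cdot 2^{-k}/t$ is, by the same sort of estimate, strictly less than $2\cdot 2^{-(m+k-2)}$, i.e. less than twice the mesh $2^{-(m+k-2)}$ of the level-$(m+k-2)$ dyadic partition. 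It then remains to invoke the elementary fact that an interval of length below twice the mesh meets at most three consecutive cells of that level (locate its left endpoint inside its cell and count forward). This produces at most three invalid subintervals for each $t$, hence at most $3C$ in all.

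The computations are routine: both length comparisons collapse to $t>2^{m-2}$, $|J|=2^{-m}$ and $k\ge 3$, and the ``length below $2\delta$ meets $\le 3$ cells'' fact needs only a little care about the endpoints shared by adjacent closed dyadic intervals (a short case split — stripe strictly inside a cell versus stripe touching a boundary — keeps the count at three in every case, using that $t>2^{m-2}$ makes the stripe strictly shorter than $2\delta$). I do not expect a genuine obstacle; the one thing that actually matters is obtaining the constant $3$ and not something larger, which is exactly what the ``at most one stripe meets $J$'' step buys. It is worth recording that the two shifts built into the construction (the $-k$ in the definition of a valid interval and the $k-2$ in the level at which Alice forbids) are chosen precisely so that a denominator $t\in P_{m-2}$ has stripe spacing $1/t$ between $2|J|$ and $4|J|$: wide enough that only one stripe can hit $J$, yet not so wide that a single stripe straddles many subintervals.
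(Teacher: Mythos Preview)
Your argument is correct and follows essentially the same route as the paper: reduce to a single denominator $t\in P_{m-2}$, show that at most one bad stripe can meet the current $m$-dyadic interval $J$, and then show that this stripe has width strictly less than twice the length of an $(m+k-2)$-dyadic interval, so it meets at most three of them. The only cosmetic difference is that you bound the distance between two stripe \emph{centers} by $|J|+2\cdot 2^{-k}/t<2|J|\le 1/t$, whereas the paper bounds the \emph{gap} between consecutive stripes by $(1-2\varepsilon)/t>|J|$; both yield the same ``one stripe'' conclusion from the same inputs $t\le 2^{m-1}$, $t>2^{m-2}$, and $k\ge 3$.
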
 

\begin{proof}
Vertices are prohibited at step $m$ because the corresponding interval at level $m+k-2$ is not valid, i.e., contains some point that is close to a fraction with denominator in $P_{m-2}$. There are only $C$ denominators in $P_{m-2}$, so it is enough to show that each denominator causes at most $3$ forbidden intervals. We need to show that the ``bad stripes'' of width $2\varepsilon/t$ (where $\varepsilon=2^{-k}$) separated by gaps of length $(1-2\varepsilon)/t$ may intersect at most $3$ dyadic intervals of level $m+k-2$ that are parts of the dyadic interval at level $m$ (corresponding to the current position of Bob).

Consider some denominator $t$ in $P_{m-2}$, so $2^{m-2}<t\le 2^{m-1}$.  The distance between two bad stripes is $(1-2\varepsilon)/t$. Since $k\ge 3$, we have $\varepsilon=2^{-k}\le 1/8$; recalling that $t\le 2^{m-1}$ we see that the distance is bigger that $2^{-m}$ (the length of an $m$-dyadic interval):
\[
\frac{1-2\varepsilon}{t} \ge \frac{3/4}{2^{m-1}}>\frac{1}{2^m}.
\]
 Therefore only one stripe matters, and we need to check that it may intersect at most three $(m+k-2)$-dyadic intervals. For that it is enough to show that the width of the stripe is less than two times the length of the interval (if the stripe touches four intervals, it necessarily contains two),
 \begin{center}
 \includegraphics{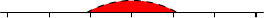}
\end{center}
and indeed
 \[
 \frac{2\varepsilon}{t}< \frac{2^{-k+1}}{2^{m-2}} = 2^{-k-m+3}=2\cdot 2^{-(m+k-2)}.
 \]
 Lemma~\ref{lem:bound} is proven.
\end{proof}

Now we combine the Lemma and the criterion from Theorem~\ref{thm:thegame} and show that Bob can win the game. Indeed, Alice forbids $3C$ vertices, each has weight $1/\beta^{k-2}$, so the inequality from Theorem~\ref{thm:thegame} requires that
\[
\beta\left(1+\frac{3C}{\beta^{k-2}} \right) \le 2,
\]
and that is what is guaranteed by the condition.
\end{proof}

Note that at step $n$ we have chosen $n$ bits of $\theta$ and consider denominators that are~$O(2^n)$. Since we add at most $3C$ forbidden vertices at every step, the set of forbidden vertices and its weight can be computed in time polynomial in $n$ (given the list of denominators as an oracle). So the $n$-th bit in the binary expansion of $\theta$ is polynomialy (in $n$) computable with the same oracle. 

\section{Optimizing the choice of parameters}

The parameter $\beta$ does not appear in the claim of Theorem \ref{thm:diophantine}, so for given $C$ and $k$ we may choose $\beta$ that makes the left hand side of the inequality (i.e., $\beta +3C/\beta^{k-1}$) minimal and verify that this minimal value does not exceed $2$. This minimal value is easy to find; for example, we may split $\beta$ into a sum of $k-1$ terms $\beta/(k-1)$ and note that we minimize the sum of $k$ quantities whose product is constant, so the minimal value is achieved when all the terms are equal:
\[
\frac{\beta}{k-1}=\frac{3C}{\beta^{k-1}}.
\]
This gives $3C(k-1)=\beta^k$, or $\beta = (3C(k-1))^{1/k}$, and the minimal value is 
\[
 \frac{k}{k-1}\beta=\frac{k}{k-1}(3C(k-1))^{1/k}.
 \]
 It does not exceed $2$ when
 \[
 \left(\frac{k}{k-1}\right)^k\cdot 3C(k-1) =  \left(\frac{k}{k-1}\right)^{k-1}\cdot 3Ck \le 2^k.
 \]
The first factor is an approximation (from below) to $e=2.71828\ldots$, so we get the following version of Theorem~\ref{thm:diophantine} where $\beta$ is eliminated:

\begin{theorem}\label{thm:diophantine-opt}
Let $T$ be a set of positive integers such that 
    \[
      | T \cap (2^{j}, 2^{j+1}]| \le C
    \]
 for all integers $j$ and some constant~$C$. Assume that for some $k\ge 3$ the inequality
    \[
     3eCk \le 2^k
    \]
holds. Then there exists a real $\theta\in[0,1]$ that is $2^{-k}$-regular with respect to all $t\in T$. Moreover, if $T$ is computable then there exists a computable $\theta$ with this property.
\end{theorem}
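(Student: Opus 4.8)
The plan is to deduce Theorem~\ref{thm:diophantine-opt} from Theorem~\ref{thm:diophantine} by picking the parameter $\beta$ optimally and checking that the hypothesis $3eCk\le 2^k$ forces condition $(*)$ for that choice. Since Theorem~\ref{thm:diophantine} already allows $T$ to be a set of positive reals, the restriction to positive integers here is only a special case, so nothing is lost on that front. I would also note at the outset that one may assume $C\ge 1$: if $C=0$ then $T$ is empty and any $\theta$ works, and in general enlarging $C$ weakens both the hypothesis and the conclusion, so there is no loss of generality.

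First I would fix $k\ge 3$ and analyse the function $f(\beta)=\beta+3C\beta^{-(k-1)}$ on $(0,\infty)$, which is exactly the left-hand side of $(*)$. Writing $\beta$ as a sum of $k-1$ equal terms $\beta/(k-1)$, the quantity $f(\beta)$ becomes a sum of $k$ positive reals — namely $k-1$ copies of $\beta/(k-1)$ together with the single term $3C\beta^{-(k-1)}$ — whose product $\bigl(\beta/(k-1)\bigr)^{k-1}\cdot 3C\beta^{-(k-1)}=3C/(k-1)^{k-1}$ does not depend on $\beta$. By AM--GM,
\[
  f(\beta)\ \ge\ k\left(\frac{3C}{(k-1)^{k-1}}\right)^{1/k},
\]
with equality exactly when all $k$ terms coincide, i.e.\ when $\beta/(k-1)=3C\beta^{-(k-1)}$, that is $\beta^{k}=3C(k-1)$. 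So the minimiser is $\beta_{0}:=\bigl(3C(k-1)\bigr)^{1/k}$ and the minimum value of $f$ is $\tfrac{k}{k-1}\,\beta_{0}=\tfrac{k}{k-1}\bigl(3C(k-1)\bigr)^{1/k}$.

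Next I would verify that this minimal value is at most $2$ under the stated assumption. Raising $\tfrac{k}{k-1}\bigl(3C(k-1)\bigr)^{1/k}\le 2$ to the $k$-th power and simplifying yields the equivalent inequality $\bigl(1+\tfrac1{k-1}\bigr)^{k-1}\cdot 3Ck\le 2^{k}$; since $\bigl(1+\tfrac1{k-1}\bigr)^{k-1}<e$ for every $k\ge 2$, the hypothesis $3eCk\le 2^{k}$ suffices. I would then also confirm that $\beta_{0}$ lies in the interval $(1,2)$ required by Theorem~\ref{thm:diophantine}: from $C\ge1$ and $k\ge3$ we get $\beta_{0}^{k}=3C(k-1)\ge 6>1$, so $\beta_{0}>1$; and $3C(k-1)<3Ck\le 2^{k}/e<2^{k}$ gives $\beta_{0}<2$. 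Taking $\beta=\beta_{0}$ in Theorem~\ref{thm:diophantine} then produces the desired $\theta\in[0,1]$, and its computable version when $T$ is computable.

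I do not anticipate a real obstacle: the argument is exactly the elementary optimisation already sketched before the statement, repackaged as an application of Theorem~\ref{thm:diophantine}. The only points needing a little care are the bookkeeping in the AM--GM step (arranging the exponents so the product is independent of $\beta$), the harmless reduction to $C\ge1$, and the verification that the optimal $\beta_{0}$ does not escape the interval $(1,2)$ — without which Theorem~\ref{thm:diophantine} could not be invoked even though $(*)$ holds numerically.
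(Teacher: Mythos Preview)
Your proposal is correct and follows essentially the same route as the paper: optimise $\beta$ in condition $(*)$ via AM--GM (equivalently, the ``sum with fixed product'' trick), obtain $\beta_{0}=(3C(k-1))^{1/k}$, and reduce $(*)$ to $\bigl(\tfrac{k}{k-1}\bigr)^{k-1}\cdot 3Ck\le 2^{k}$, which is implied by $3eCk\le 2^{k}$. Your added checks that $C\ge 1$ may be assumed and that $\beta_{0}\in(1,2)$ are welcome bits of bookkeeping that the paper leaves implicit.
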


For example, for $C=1$ (minimal possible value) we may take $k=6$ and conclude that there exists a real that is $(1/64)$-regular with respect to all $t\in T$. For bigger values of $C$ we should take the minimal value of $k$ that fits the condition of Theorem~\ref{thm:diophantine-opt}; it guarantees the existence of a $\varepsilon(C)$-regular $\theta$ with respect to all denominators in $T$ for $\varepsilon(C)=2^{-k(C)}$.  Asymptotically (as $C\to\infty$) this value is 
\[
\varepsilon(C) = \frac{1}{(3e+o(1))C\log_2C}
\]
Indeed,
\[
k(C)\approx \log_2 (3eCk(C)) = \log_2(3eC)+\log_2k(C),
\]
so 
\[
k(C)=(1+o(1))\log_2(3eC)=(1+o(1))\log_2 C
\]
 and 
 \[
 \log_2k(C)=\log_2 (3eC) + \log_2((1+o(1))\log_2 C).
 \]
 If we want a bound that is valid for all $C\ge 2$, one can estimate the values of the parameters for small $C$ and see that 
 \[
   \theta(C)=\frac{1}{64C\log_2 C}
 \]
 is enough (here we use $64$ instead of the asymptotic value $3e=8.15\ldots$). This gives the following corollary:
 \begin{theorem}\label{thm:bounds}
 Let $C\ge 2$ be an integer and let $T$ be a \textup[computable\textup] set of positive integers that contains at most $C$ elements in every interval $(2^j,2^{j+1}]$. Then there exists a \textup[computable\textup] real $\theta$ that is $1/(64C\log_2C)$-regular for all $t\in T$.
 \end{theorem}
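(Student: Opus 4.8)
The plan is to read Theorem~\ref{thm:bounds} off Theorem~\ref{thm:diophantine-opt}: choose $k$ as small as the hypothesis $3eCk\le 2^k$ permits, and then check that the resulting regularity guarantee $2^{-k}$ is at least $1/(64C\log_2 C)$ for every integer $C\ge 2$.

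Concretely, given an integer $C\ge 2$, let $k=k(C)$ be the least integer $\ge 3$ with $3eCk\le 2^k$ (such a $k$ exists because $2^k/k\to\infty$; one also notes $k(C)$ is nondecreasing in $C$, with $k(2)=7$). Theorem~\ref{thm:diophantine-opt} then supplies a real $\theta\in[0,1]$ that is $2^{-k(C)}$-regular for all $t\in T$, and a computable such $\theta$ when $T$ is computable. So the bracketed ``computable'' clause needs no separate argument: $k(C)$ is merely a constant obtained from $C$ by a finite search. Everything thus reduces to the purely numerical inequality $2^{k(C)}\le 64\,C\log_2 C$ for all integers $C\ge 2$.

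For large $C$ this is immediate from the estimates already carried out just before the theorem: there $k(C)=(1+o(1))\log_2 C$, whence $2^{-k(C)}=1/((3e+o(1))C\log_2 C)$, which comfortably beats $1/(64C\log_2 C)$ once $C$ is large, since $3e=8.15\ldots<64$. It then remains to treat the finitely many values of $C$ below the threshold where the asymptotics take over; for each such $C$ one computes $k(C)$ directly and checks $2^{k(C)}\le 64C\log_2 C$ by hand. This is exactly the step where the asymptotic constant $3e$ has to be inflated to $64$: the extremal small cases are $C=2$, where $k(2)=7$ and $2^7=128=64\cdot 2\cdot\log_2 2$, and $C=4$, where $k(4)=9$ and $2^9=512=64\cdot 4\cdot\log_2 4$.

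The only part that is not automatic is locating the seam between the two regimes and doing the finite check carefully. One needs an explicit form of the bound $k(C)\le\log_2 C+\log_2\log_2 C+O(1)$ — obtained from $2^{k-1}<3eC\,k$ by one round of bootstrapping — precise enough to certify $2^{k(C)}\le 64C\log_2 C$ from a concrete value of $C$ onward, after which the few smaller $C$ are dispatched one by one. The mild subtlety, which is where the work really lies, is that $k(C)$ is a step function while $64C\log_2 C$ is smooth and increasing, so the tightest instances occur just after each jump of $k(C)$, and it is precisely at those points that the constant $64$ (rather than anything smaller) is required.
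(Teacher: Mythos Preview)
Your approach is exactly the paper's: apply Theorem~\ref{thm:diophantine-opt} with the minimal admissible $k$, use the asymptotic $2^{-k(C)}=1/((3e+o(1))C\log_2 C)$ for large $C$, and verify the small cases by hand. You add the useful observation that equality $2^{k(C)}=64C\log_2 C$ is attained at $C=2$ (with $k=7$) and $C=4$ (with $k=9$), which pins down why the constant $64$ cannot be improved by this route.
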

 
 This result slightly improves the bound from~\cite{Peres2010Apr} and its computable version in~\cite{Mourad2024Jun}. Moreover, if we can compute in time polynomial in $n$ the $n$ first bits of the $n$ first elements of $T$, then we can compute in time polynomial in $n$ the $n$ first bits of $\theta$.

\section{Historical remarks}\label{sec:history}

\subsection{Erdős' question}

The original formulation of Erdős' question in \cite[p.~96]{erdosDiophantine} was: ``Finally I state a few disconnected problems. Let $n_1 < n_2 < \ldots$ be an infinite sequence of integers satisfying $n_{k+1}/n_k> c > 1$. Is it true that there always is an irrational $\alpha$ for which the sequence $(n_k\alpha)$ is not everywhere dense?'' (The parentheses in $(n_k\alpha)$ mean that the sequence is considered modulo~$1$.) Note that Erdős' asks this sequence not to be dense, and we discuss a bit stronger requirement: the sequence is never $\varepsilon$-close to $0$. 

Another difference is that the condition for the denominators is stronger: Erdős requires that $n_{k+1}/n_k> c$ for some $c>1$ while we use only that each interval $(s,2s]$ contains $O(1)$ denominators.

There was a sequence of improvements related to Erdős' question that is summarized by Peres and Schlag~\cite{Peres2010Apr}: first bounds of Mathan (1980) and Pollington (1979) had $O(C^4\log C)$ in the denominator;  Katznelson (2001) improved the bound replacing the denominator by $O(C^2 \log C)$; then Akhunzhanov and Moshchevitin (2004) got $O(C^2)$ (no logarithmic factor),  and such a bound with better explicit constant was also obtained by Dubickas (2006). Peres and Schlag~\cite{Peres2010Apr} provide $O(C\log C)$ bound; 
Mourad used additional ideas to get the computable version (though with slightly worse constant $1/360$ instead of $1/240$ by Peres and Schlag). 
Our constant $1/64$ is approximately five times better. As explained by Peres and Schlag, it is not possible to do better than $\Omega(C)$, but it is still open whether the $\log n$ term can be removed.





\subsection{Forbidden factors}

The questions about forbidden factors appeared several times from different perspectives (algebra, computer science, combinatorics on words). From the computer science perspective the story goes as follows. Leonid Levin noted long ago that there exist an \emph{everywhere complex sequence}, a bit sequence such the every its $n$-bit substring has Kolmogorov complexity at least $0.99n-O(1)$. (Here $0.99$ can be replaced by any constant smaller than~$1$.) Levin proved this result using Kolmogorov complexity (this proof is reproduced in~\cite{dls}). This result can be translated into a combinatorial language as follows: if for every $n$ we declare at most $2^{0.99n}$ strings as ``forbidden'', there exists a constant $c$ and an infinite sequence that does not contain forbidden strings of length greater than~$c$. (See~\cite[section 8.5, p.~241]{SUV2016} for details.) It was noted by Rumyantsev and Ushakov~\cite{RumyantsevU06} that one can prove this combinatorial statement using Lovász local lemma (LLL) and this argument can be generalized to the multidimensional case (where the original complexity argument by Levin does not work): there exists a 2-dimensional bit configuration where every rectangle has almost maximal complexity. Rumyantsev~\cite{Rumyantsev07} used this technique (Kolmogorov complexity and Lovász local lemma) for word combinatorics (construction of a sequence with given critical exponent). Miller~\cite{Miller2012} gave a much more precise condition for the existence of a sequence that avoids  a given set of forbidden strings (reproduced above as Theorem~\ref{thm:miller}) that can be easily applied to the computable case --- unlike LLL that (in its natural form) uses König's lemma that disrupts computability. However, after the Moser--Tardos effective proof of the Lovász local lemma was discovered, Fortnow suggested to look for an infinite computable version of the LLL, and indeed Rumyantsev found such a version~\cite{Rumyantsev2013,Rumyantsev2014}. The Moser--Tardos proof used some compressibility argument that can be transformed into a compressibility argument for Miller's criterion (see \cite{Shen2017,Shen2017a} for details and references) with exactly the same condition. This looks quite mysterious (why exactly the same condition appears in two completely different arguments) --- moreover, a closely related condition appeared much earlier (1964) in algebraic context (Golod--Shafarevich theorem, see~\cite[Appendix]{Shen2017a} for details), 
\ifthenelse{\boolean{anon}}{
  and reappeared in Rosenfeld's counting argument that reproves Miller's result see~\cite{Rosenfeld2020Sep}, and was also used in~\cite{Rosenfeld2022Sep} to prove that the numbers of square-free words of length $n$ that respect the $4$-list assignment grows exponentially with $n$.
}{
  and reappeared in the counting argument that reproves Miller's result  and was found by the first author, see~\cite{Rosenfeld2020Sep}. The latter argument was also used in~\cite{Rosenfeld2022Sep} where the first author of this note gave  another proof of this result based on a new counting argument which also implies that the numbers of square-free words of length $n$ that respect the $4$-list assignment grows exponentially with $n$.
}

Note also that different modification of LLL (and its computable versions) were also used to prove results about square-free words and Erdős question. In particular, Pegden in 2011 introduced his Left-handed Lov\'asz Local Lemma to study nonrepetitive games (one player wants to avoid squares or other repetitions while the other want to create them, and they alternate in adding letters to a string)~\cite{Pegden2011Jan}. In 2013, Grytczuk, Kozik and Micek used another proof of this result based on the so-called entropy compression technique, which also implies that there is an efficient algorithm to construct finite square-free words respecting $L$~\cite{Grytczuk2013Mar}.

\ifthenelse{\boolean{anon}}{
}{
\section*{Acknowledgement}
The first author would like to thank Yann Bugeaud for motivating this research by asking whether the counting argument introduced in \cite{Rosenfeld2020Sep} could be applied to the lacunary sequence problem. The answer is yes, but the current argument yields identical bounds and provide a computable result.

This research was funded in part by the French National Research Agency (ANR) under the projects ANR-24-CE48-3758-01 and ANR-21-CE48-0023.
}

\bibliographystyle{unsrt}
\bibliography{biblio}

\end{document}